\newtheorem{theorem}{Theorem}[section]
\newtheorem{remark}{Remark}[section]
\newtheorem{lemma}{Lemma}[section]
\newtheorem{assumption}{Assumption}
\def\WTGLC{Hard rejection sampling}
\def\return{\textbf{return} }
\def\restart{\textbf{restart}}
\def\X{ \textbf{X} }
\def\Y{ \textbf{Y} }
\def\speedup{\text{\rm speedup}}
\def\XI{\textbf{$X^{(I)}$}}
\def\xI{\textbf{$x^{(I)}$}}
\def\EI{E^{(I)}}
\def\yI{y^{(I)}}
\def\P{{\mathbb P}}
\def\A{\mathcal{A}}
\def\B{\mathcal{B}}
\def\L{\mathcal{L}}
\def\R{\mathbb{R}}
\def\e{\mathbb{E} }
\def\bfa{{\bf a}}
\def\bfw{{\bf w}}
\def\XI{\textbf{$X^{(I)}$}}
\def\xI{\textbf{$x^{(I)}$}}
\def\EI{E^{(I)}}
\def\yI{y_I}
\newcommand{\ignore}[1]{ }
\def\ts{\hskip.03cm}
\begin{document}

\title[Improvements to exact Boltzmann sampling]{Improvements to exact Boltzmann sampling using probabilistic divide-and-conquer and the recursive method}
  \author{Stephen DeSalvo}
\address{UCLA Department of Mathematics, 520 Portola Plaza, Los Angeles, CA, 90095}
\email{stephendesalvo@math.ucla.edu}

\date{\today}

\begin{abstract}
We demonstrate an approach for exact sampling of certain discrete combinatorial distributions, which is a hybrid of exact Boltzmann sampling and the recursive method, using probabilistic divide-and-conquer (PDC). 
The approach specializes to exact Boltzmann sampling in the trivial setting, and specializes to PDC deterministic second half in the first non-trivial application. 
A large class of examples is given for which this method broadly applies, and several examples are worked out explicitly.
\end{abstract}

\ignore{ % MSC and keyword classifications
\begin{keyword}[class=MSC]
\kwd[Primary ]{65C50}
\kwd[; secondary ]{60C05}
\kwd{90-04}
\end{keyword}

\begin{keyword}
\kwd{exact sampling}
\kwd{probabilistic divide-and-conquer}
\kwd{Boltzmann sampler}
\kwd{random combinatorial structures}
\kwd{rejection sampling}
\kwd{recursive method}
\end{keyword}
}

\maketitle

\section{Introduction}

The Boltzmann sampler has transformed the way in which combinatorial structures are analyzed and sampled by taking advantage of the generating function structure.  
One starts with a family of combinatorial objects, $\mathcal{C}$, parameterized by various integer-valued statistics like size and number of components, and writes $\mathcal{C}$ as a disjoint union of \emph{finite} sets,
for example,
\[ \mathcal{C} = \bigcup_n \mathcal{C}_n = \bigcup_n \bigcup_k \mathcal{C}_{n,k}. \]
We may have $n$ and $k$ represent, for example, certain statistics like the size of an integer partition and the number of parts, respectively, and $\mathcal{C}_{n,k}$ is the set of all integer partitions of size~$n$ into exactly $k$ parts. 
The goal is then to sample from such a set of objects. 

A standard approach for specifying a sampling algorithm is to name the Boltzmann model, and construct a combinatorial object recursively via the sizes of its components~\cite{Boltzmann};
for example, the part sizes of an integer partition, the block sizes of a set partition, the cycle sizes in a random permutation. 
The \emph{Boltzmann sampler} is then a sampling algorithm which gives a weight to each component-size in proportion to its prevalence in the set of objects of a given size, and does so via a joint distribution of \emph{independent} random variables. 
For unlabelled structures, an object of size~$n$ is generated with probability, for some given real-valued tilting parameter $x$, 
\[\P\left( \mbox{random object is of size~$n$}\right) =  \frac{c_n\,  x^n}{C(x)}, \]
where $c_n$ is the number of objects of size~$n$ and $C(x) = \sum_{n \geq 0} c_n x^n$ is the generating function of the sequence~$c_n$, $n \geq 0$.
For labelled structures, an object of size~$n$ is generated with probability, for some given real-valued tilting parameter $x$, 
\[\P\left( \mbox{random object is of size~$n$}\right) =  \frac{c_n\,  x^n}{n!\, \widehat{C}(x)}, \]
where $\widehat{C}(x) = \sum_{n \geq 0} c_n \frac{x^n}{n!}$ is the exponential generating function  of the sequence~$c_n$, $n \geq 0$.
The result of a Boltzmann sampler is a random object \emph{of random size}; for example, it generates an object in $\mathcal{C}_N$, where $N$ is a random variable with a certain distribution. 

A spectacular property of the Boltzmann sampler is that, conditional on the event $\{N~=~n\}$, the component structure generated is in proportion to the number of objects in $\mathcal{C}_n$ which have that component structure.  
Thus, one immediately obtains an \emph{exact} sampling algorithm for the uniform distribution over $\mathcal{C}_n$ by repeatedly sampling until the event $\{N = n\}$ occurs, discarding samples which do not satisfy this event; this is known as \emph{exact Boltzmann sampling}~\cite{Boltzmann}. 
The limitation of exact Boltzmann sampling is then the probability that a random-sized object generated via a Boltzmann sampler satisfies the event $\{N = n\}$.  
Owing to the plethora of results pertaining to combinatorial enumeration, local limit theorems, and saddle point analysis, one can estimate this probability, and define the \emph{rejection cost} as $\P(N = n)^{-1}$, since it is the \emph{expected} number of times we must sample using the Boltzmann sampler before a sample satisfies the event $\{N=n\}$. 
This rejection cost can grow polynomially or even exponentially in~$n$, depending on the combinatorial structure and the event of interest. 

A general method for the random sampling of combinatorial structures is the recursive method of Nijenhuis and Wilf~\cite{NW, NWBook}. 
The method samples the components of a combinatorial structure one at a time, in proportion to its prevalence in the overall target set, by constructing a table of values based on a recursion that the combinatorial sequences satisfies.
This is equivalent to forming a conditional probability distribution of component-sizes, and is also equivalent to an unranking algorithm, which enumerates all possible objects of size~$n$, say $p(n)$, samples a uniform number between $1$ and $p(n)$, and determines the component structures via the recursion.
Once this table is complete, sampling is efficient. 
The main drawback of this method is that the table size may be overwhelming, and often only a small portion of the table is utilized with high probability, even though the full table is needed in principle. 

Probabilistic divide-and-conquer (PDC) is an exact sampling method which divides a sample space into two separate parts, samples each part separately, and then combines them to form an exact sample from the target distribution; see~\cite{PDC, PDCDSH}. 
It was successfully utilized in~\cite{PDC} to obtain an asymptotically efficient random sampling algorithm for integer partitions. 
A similar approach was used in~\cite{Alonso} for the random sampling of Motzkin words, also obtaining an asymptotically efficient sampling algorithm. 
In both applications, the key to obtaining an asymptotically efficient algorithm was the explicit, efficient computing of certain rejection functions, which are not always present in more general contexts. 
Thus, in our present treatment, we have applied PDC in such a way that the corresponding rejection formulas are always explicit and efficient to compute.  
In addition, our main algorithm, Algorithm~\ref{PDC recursive method}, is embarassingly parallel, see Remark~\ref{remark:parallel}. 

In Section~\ref{sect:other}, we review various available sampling methods. 
In Section~\ref{main:section}, we present our main algorithm, which combines elements of exact Boltzmann sampling with the recursive method. 
An analysis of the costs and benefits are contained in Section~\ref{sect:costs}.  
We apply this idea to integer partitions in Section~\ref{sect:integer:partitions} and to set partitions in Section~\ref{sect:set:partitions}, and demonstrate how this idea generalizes to a larger class of combinatorial structures in Section~\ref{sect:three:classes}. 

\section{Exact sampling}
\label{sect:other}

\subsection{Alternatives to exact sampling}

An alternative to exact Boltzmann sampling is to run a \emph{forward} Markov chain on the state space. 
The main drawback is that, unless we can already sample uniformly from the state space, after any finite number of steps (chosen in advance) there will always be some form of bias in the chain. 
\emph{If} one can prove that the chain is rapidly mixing, then this error is usually considered an acceptable form of bias, as it is often of the same order of magnitude of other forms of errors after a polynomial number of steps. 
However, there are many examples where proving that a Markov chain is rapidly mixing is not so straightforward, see for example~\cite[Chapter 23]{MarkovChainBook}, and other examples where it is proved that mixing takes an exponentially long time, see for example~\cite{NegativeSIS, MCLimitations}.  

Another alternative is the Boltzmann sampler (note the absence of the word \emph{exact}), which samples a random combinatorial structure of \emph{random size} $N$, tilted so that $\e\ts N$ is close to $n$, with each object of a given size equally likely. 
There are many quantitative reasons why accepting a random sample of a random size serves as a good surrogate for an exact sample. 
A primary example is the limit shape of integer partitions, see~\cite{PittelShape}, where it was shown that the limit shape of integer partitions coincides with the limit shape obtained by a Boltzmann model; see also~\cite{Bogachev, LD, VershikKerov, Yakubovich} for related results. 
However, it is shown in~\cite{PittelSetPartitions} for set partitions that there exist statistics which are qualitatively different, even asymptotically as $n$ tends to infinity, depending on whether or not the true joint distribution of component-sizes is used; i.e., whether or not the event $\{N = n\}$ is required for all samples. 

A standard approach to improve on exact Boltzmann sampling is to consider an event of the form $E_{n,\epsilon} = \{N \in (n(1-\epsilon),n(1+\epsilon))\}$, for some $\epsilon>0$.  
This effectively widens the target by a small factor of $n$, and often improves the rejection rate to $O(1)$. 
To see this, we note that, as in~\cite{IPARCS}, many Boltzmann samplers with appropriately chosen tilting parameter $x$ produce random target sizes $N$ which are asymptotically normally distributed with mean $n$ and standard deviation $O(n^{-a})$, for some $a > 0$.  
This means, then, that the exact Boltzmann sampler rejects an expected $O(n^{a})$ number of samples before a sample is accepted. 
When $a < 1$, e.g., $a=3/4$ in the case of integer partitions, this implies that eventually, for large enough $n$, all approximate samples will be accepted, making this approach asymptotically equivalent to a Boltzmann sampler. 

\subsection{Other exact sampling methods}

An alternative to running a standard Markov chain forward in time is \emph{Markov chain coupling from the past}, see~\cite{ProppWilson}, where one instead runs simultaneously a Markov chain \emph{on every state in the state space}, starting from some time in the past, forward in time, and couples together chains when they transition into the same state, treating them as the same chain from that time forward. 
If after one step forward in time, starting from time $-1$, all chains are not coupled, then we restart from time $-2$ and run the chain forward two steps, coupling the chains as they coincide.  If not all chains are coupled at time $0$, we reset at time $-2^b$, for $b = 1, 2, \ldots$, until all chains are coupled at time $0$, at which point the chain is in \emph{exact} stationarity.  
This approach has obvious drawbacks, but can also be very effective when there exists a monotonic structure on the transitions and a coupling which allows us to only consider a few extreme chains, with the implication that all chains will be coupled once those extreme chains are coupled. 
Once all chains are coupled, we do indeed have an \emph{exact} sample in finite time.  See~\cite{huber2015perfect} for further examples. 

Another approach intimately related to exact sampling is importance sampling, where instead of demanding an exact sample from a structure, one instead demands the ability to associate a weight to the generated sample, which is a measure for the bias in the sampling algorithm.  The weights can then be used to obtain unbiased estimates of statistics. 
An importance sampling algorithm can be converted into an exact sampling algorithm by applying a rejection to the generated sample. 
The rejection may be particularly severe, as e.g., one very special case of contingency tables~\cite{NegativeSIS}, where it was shown that the weights can be exponentially small.  

We should also note, as is often the case with fundamental combinatorial structures, that alternative sampling algorithms exist which are tailored to the specific form of the components and their intricate dependencies.  For example, one would not attempt to compete with the Fisher-Yates shuffle~\cite{FisherYates} to generate a random permutation, nor is it likely to be fruitful to generate a random set partition according to the Ewen's measure in block structure form more optimally than the Chinese restaurant process, see for example~\cite{Aldous1985}. 
However, if one deviates from the classical form of the combinatorial structure, then it is not always apparent how to adapt these sampling algorithms. 

\subsection{The Recursive Method}
\label{sect:recursive}

The recursive method \cite{NW, NWBook} exploits the recursive nature of a combinatorial sequence in order to extract the conditional distribution of component-sizes in a random sample. For example, letting $p(n,k)$ denote the number of integer partitions of size~$n$ into parts of size at most $k$, we have the well-known recursion
\begin{equation}\label{pnk:recursion} p(n,k) = p(n-k,k) + p(n,k-1) \qquad 1 \leq k \leq n,  \end{equation}
with $p(k,0) = 1$ when $k \geq 0$, $p(k,n) = p(n,n)$ when $k > n$, and $p(k,n) = 0$ otherwise. 
This recursion encodes the idea that we can build a partition of size~$n$ into parts of size at most $k$ by either appending another part of size~$k$ and repeating with $n$ replaced by $n-k$, or by deciding that there shall be no more parts of size~$k$, and continuing with $k$ replaced by $k-1$. 

To obtain a uniform measure, we simply weight these decisions appropriately, and note a surprising independence of our decision at each step; i.e., once we make a decision, the sampling problem restarts with smaller parameters, and is then independent of previous decisions, depending only on the current input parameters $k$ and $n$. 
In this way, it is straightforward to sample part sizes one at a time using a single table of size $n \times n$ until we reach a trivial completion.  

The main drawback is the requirement that we are able to compute the values of $p(n,k)$ exactly, or at least with enough precision on demand to decide definitely between the two courses of action; see Remark~\ref{ADZ} below. 
The dimension of the table is a priori $n \times n$, and with the asymptotic analysis in~\cite{ErdosLehner}, specifically in the special case of integer partitions, only the entries in the first $t\ts \sqrt{n}\, \log(n)$ rows are needed with high probability, taking $t>0$ large enough.  

An alternative recursive approach, the one originally developed in~\cite{NW}, is to consider a recursion on the sequence $p(n)$, the number of integer partitions of $n$, directly, and use its combinatorial interpretation to sample the combinatorial structure in a similar albeit inherently distinct manner.  
A well-known recursion due to Euler is 
\begin{equation}\label{Euler:recursion} n\, p(n) = \sum_{m < n} \sigma(n-m)\, p(m), \qquad n \geq 1, \end{equation}
where we take $p(0) = 1$, and $\sigma(m)$ is the sum of all divisors of $m$. 
This recursion can be seen by writing out $n$ copies of all $p(n)$ partitions of $n$, and then combining partitions of $m$ with certain partitions of $(n-m)$ via divisors of $n-m$.  

To obtain a uniform distribution, one samples a random variable $X$ with distribution given by 
\[ \P(X = m) = \frac{\sigma(n-m)\, p(m)}{n\, p(n)}, \qquad m=0,1,\ldots,n-1. \]
This random variable captures the correct proportion of partially completed partitions, after which we must determine the part sizes $d$ in proportion to the number of partitions of $m$ corresponding to divisors of $n-m$, i.e., 
\[ \P(Y = d) = \frac{d}{\sigma(n-m)}, \qquad  d\, |\, (n-m). \]
Once we have chosen this $d$, we then fill in $(n-m)/d$ parts of size $d$ and update $n$ to be the value~$m$ and repeat. 

\begin{remark}\label{ADZ}
{\rm In order to extend from floating-point accuracy to arbitrary accuracy, it has been pointed out by many authors, see for example~\cite[Section~4]{denise1999uniform} and~\cite[Section~5.2]{PDC}, that one does not need to compute all quantities in an exact sampling procedure to arbitrary precision initially, as long as one can keep track of sufficiently small intervals for which the exact quantities lie, and further precision is available on demand. 
This applies to both numerical calculations as well as generation of random variables, and is referred to as the ADZ method (after Alonso, Denise, Zimmerman) in~\cite{denise1999uniform}.
}\end{remark}

Many straightforward generalizations to~\eqref{pnk:recursion} and~\eqref{Euler:recursion} have previously been exploited for integer partitions, see for example~\cite{ErdosElementary}. 
A very broad generalization of~\eqref{pnk:recursion}, applicable to more than just integer partitions, is contained in~\cite[Chapter 13]{NWBook}, where it is noted that many combinatorial sequences satisfy a recurrence relation of the form
\[ a(n,k) = \varphi(n,k) a( n_w, k) + \psi(n,k) a(n_s,k-1), \]
where $\varphi, \psi$ are given explicitly depending on the combinatorial family, and $n_w$ and $n_s$ are typically of the form $n-a$ for some $a \geq 0$.  

A generalization to~\eqref{Euler:recursion} is also included in~\cite[Postscript:~deux ex machina]{NWBook}, which is connected to the ``prefab" concept of~\cite{BenderGoldman}. 
As is often the case, it is easiest to think of these generalizations as originating from a special case like integer partitions. 
Briefly, one attempts to decompose a combinatorial structure into ``prime" components with multiplicities, which is then used to obtain the form of the generating function and establish recurrence relations. 
For integer partitions, the prime components are the positive integers, and every integer partition of $n$ can be uniquely decomposed into components of sizes $1, 2, \ldots, n$ with multiplicities. 
There are of course technical conditions which must be satisfied, but under reasonable assumptions on how to synthesize two combinatorial objects the idea generalizes to other ``decomposable" combinatorial structures in a natural manner; see~\cite{NWBook} for more examples.

\subsection{Probabilistic Divide-and-Conquer}
\label{sect:pdc}

Probabilistic divide-and-conquer (PDC) is a technique for \emph{exact} sampling, which divides a sample space into two pieces, samples each separately, and then combines them to form an \emph{exact} sample from the target space. 
In this paper, our focus is on a particular parameterization of a sample space specifically suited to Boltzmann sampling. 
Rather than recursively build a Boltzmann model, we instead assume a target sample space which can be written as a joint distribution of real-valued random variables as follows: for each integer $n\geq 1$, let $\X = (X_1, X_2, \ldots, X_n)$ denote an $\R^n$--valued joint distribution of mutually independent random variables, and denote by $\mathcal{F}$ the Borel $\sigma$-algebra of measurable events on $\R^n$.  
Given a set $E_n \in \mathcal{F}$, we define the distribution of $\X_n'$ as
\begin{equation}
\label{distribution}
\L(\X_n') := \L\left((X_1, X_2, \ldots, X_n)\ \Big|\ \X \in E_n \right).
\end{equation}
Many exact Boltzmann samplers, in particular the examples in~\cite{Boltzmann}, can be described in this context using the event $E_n~=~\{\sum_{i=1}^n i\, X_i = n\},$ where the weighted sum is attributing weight~$i$ to component~$i$.  

A ubiquitous first technique for sampling from conditional distributions of the form~\eqref{distribution} is rejection sampling~\cite{Rejection}, for which we describe two main forms.  The first is to sample from the unconstrained distribution $\L(\X)$ and reject with probability $1$ if the event $\{\X \in E_n\}$ is not satisfied; we refer to this form of rejection sampling as \emph{hard rejection sampling} since the rejection probability is in the set $\{0,1\}$.  The second form samples from some alternative distribution $\L(\Y)$, for which the rejection probability lies in the interval $[0,1]$, and is rejected depending on the observed outcome of the sample, say $a$, with some auxiliary randomness; we refer to this form as \emph{soft rejection sampling}, since it requires an auxiliary random variable $U$, uniform over the interval $[0,1]$, and the computation of a function $t(a)$, with the decision to reject only when the event $\{U > t(a)\}$ occurs. 

For our particular parameterization, the hard rejection sampling algorithm is to sample from $\L\left(X_1, X_2, \ldots, X_n\right)$ repeatedly until the event $\{ \X \in E_n\}$ occurs, which is equivalent to an exact Boltzmann sampler.  
The overall number of rejections is geometrically distributed, see for example~\cite{devroye}, with expected value $\P\left( \X \in E_n\right)^{-1}$.

PDC allows us to fashion divisions which attempt to lower the total amount of uncertainty at any given stage of the algorithm, and hence improve upon the rejection cost. 
To apply PDC, we choose a division of the sample space consisting of $A \in \mathcal{A}$ and $B \in \mathcal{B}$, where $A$ and $B$ are independent and can be sampled separately, and the target set $S \in \mathcal{A} \times \mathcal{B}$ can be described as \[\{ (A,B) \in \mathcal{A}\times \mathcal{B} : (A,B) \in E\},\] where $E$ is an event either of positive probability or which satisfies a regularity condition. 
The PDC Lemma below motivates an approach for exact sampling.

\begin{lemma}[PDC Lemma~\cite{PDC}]
\label{PDC:lemma}
Assume $E$ is an event of positive probability.  
Suppose $X$ is a random element of $\A$ with distribution 
\begin{equation}\label{def X}
   \L(X) = \L( \, A \, | \, (A,B)\in E \, ),
\end{equation}
and  $Y$ is a random element of $\B$ with conditional distribution
\begin{equation}\label{def Y}
   \L(Y \, | X=a ) = \L( \, B \, | \, (a,B)\in E \, ).
\end{equation}
Then $\L(X,Y) = \L( (A,B) | (A,B) \in E)$.
\end{lemma}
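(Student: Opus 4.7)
The plan is a direct measure-theoretic computation: verify that the joint law prescribed by \eqref{def X}--\eqref{def Y} agrees, on a generating $\pi$-system of product sets, with the target law $\L((A,B)\,|\,(A,B)\in E)$. Concretely, fix measurable $C\subseteq\A$ and $D\subseteq\B$ and expand
\[
\P(X\in C,\, Y\in D) \;=\; \int_C \P(Y\in D\mid X=a)\,\L(X)(da).
\]
Into this I would substitute the two defining identities: $\L(X)$ from \eqref{def X} and $\L(Y\mid X=a)$ from \eqref{def Y}.

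The first key step is to rewrite the conditioning in terms of unconditioned laws. Setting $g(a) := \P((a,B)\in E)$, we have $g(a) = \P((A,B)\in E\mid A=a)$ by the independence of $A$ and $B$, so
\[
\L(X)(da) \;=\; \frac{g(a)}{\P((A,B)\in E)}\,\L(A)(da),
\qquad
\L(Y\mid X=a)(db) \;=\; \frac{\mathbf{1}_{(a,b)\in E}}{g(a)}\,\L(B)(db).
\]
Both expressions are well-defined: the denominator $\P((A,B)\in E)$ is positive by hypothesis, and $g(a)$ is positive for $\L(X)$-a.e.\ $a$ since $\L(X)$ is supported where $g(a)>0$.

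The second key step is to multiply these to form the joint density and watch the $g(a)$ factors cancel:
\[
\L(X)(da)\,\L(Y\mid X=a)(db)
\;=\; \frac{\mathbf{1}_{(a,b)\in E}}{\P((A,B)\in E)}\,\L(A)(da)\,\L(B)(db).
\]
Now invoke independence of $A$ and $B$ once more to identify $\L(A)\otimes\L(B) = \L(A,B)$, giving the right-hand side of \eqref{def X}'s analogue at the joint level, namely $\L((A,B)\,|\,(A,B)\in E)(da,db)$. Integrating over $C\times D$ and appealing to a standard monotone-class / Dynkin $\pi$-$\lambda$ argument extends the equality from product sets to all of $\mathcal{F}$, completing the proof.

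The main obstacle is largely bookkeeping rather than conceptual: one must argue that the regular conditional distribution in \eqref{def Y} is truly a version of $\L(B\mid (a,B)\in E)$ for $\L(X)$-a.e.\ $a$ (so that the cancellation of $g(a)$ is legitimate), and invoke independence in the correct place so that $\L(B\mid A=a)=\L(B)$. Once the product structure is in place, the cancellation is mechanical.
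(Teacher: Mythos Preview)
The paper does not supply its own proof of this lemma; it is stated with a citation to~\cite{PDC} and then used without further argument. Your argument is correct and is the natural one: disintegrate the joint law of $(X,Y)$ via the prescribed marginal~\eqref{def X} and conditional~\eqref{def Y}, use independence of $A$ and $B$ to write each in density form against $\L(A)$ and $\L(B)$, cancel the common factor $g(a)=\P((a,B)\in E)$, and identify the result with $\L((A,B)\mid (A,B)\in E)$. The bookkeeping remarks about $g(a)>0$ on the support of $\L(X)$ and the $\pi$-$\lambda$ extension are appropriate and sufficient.
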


Algorithms~\ref{WTGL procedure} and~\ref{PDC procedure} below present the standard hard rejection sampling algorithm and the standard PDC sampling algorithm, respectively, in the language of a PDC division. 
Note that the designer of the algorithm must specify the division in advance, and that PDC algorithms are very sensitive to the specified division, since one must be able to sample from the corresponding conditional probability distributions.

\begin{algorithm}[H]{\rm
\begin{algorithmic}
\State 1.  Generate sample from $\L(A)$, call it $a$.
\State 2.  Generate sample from $\L(B)$, call it $b$.
\State 3.  Check if $(A,B) \in E$; if so, return $(a,b)$, otherwise restart.
\end{algorithmic}}
\caption{\WTGLC\ from $\L( (A,B)\, |\, (A,B)\in E)$}
\label{WTGL procedure}
\end{algorithm}

\begin{algorithm}[H]{\rm
\begin{algorithmic}
\State 1. Generate sample from $\L(A\, |\, (A,B) \in E)$, call it $x$.
\State 2. Generate sample from $\L(B\, |\, (x,B) \in E)$ call it $y$.
\State 3. Return $(x,y)$.
\end{algorithmic}}
\caption{Probabilistic Divide-and-Conquer sampling from $\L( (A,B)\, |\, (A,B)\in E)$} 
\label{PDC procedure}
\end{algorithm}

As a first approach for fashioning an explicit and practical PDC algorithm, we modify Algorithm~\ref{PDC procedure} above to utilize soft rejection sampling for the sampling of the first conditional distribution $\L(A\, |\, (A,B) \in E)$, and present this algorithm in Algorithm~\ref{PDC procedure von Neumann} below. 

\begin{algorithm}[H]{\rm
\begin{algorithmic}
\State 1. Generate sample from $\L(A),$ call it $a$.
\State 2. Accept $a$ with probability $t(a)$, where $t(a)$ is a function of $\L(B)$ \\
\ \ \ \ and $E$; otherwise, restart.
\State 3. Generate sample from $\L(B\, |\, (a,B) \in E),$ call it $y$.
\State 4. Return $(a,y)$.
\end{algorithmic}}
\caption{Probabilistic Divide-and-Conquer sampling from $\L( (A,B)\, |\, (A,B)\in E)$ using soft rejection sampling}
\label{PDC procedure von Neumann}
\end{algorithm}

At this point, it is apparent that two quantities are necessary to apply this PDC algorithm
\begin{enumerate}
\item The rejection function $t(a)$, for each $a \in \mathcal{A}$;
\item $\L(B\, |\, (a,B) \in E)$ for each $a\in \mathcal{A}$.
\end{enumerate}

Given our assumed parameterization of the target sample space, for many reasonable choices of divisions it is often straightforward to write down an explicit expression for $t(a)$, which we shall demonstrate shortly. 
It is not necessarily straightforward to evaluate $t(a)$, however, which is why previous PDC algorithms have utilized divisions which make $t(a)$ explicit and efficient to compute; see~\cite{PDC, PDCDSH}. 
In addition, previous PDC algorithms have either been fashioned such that $|\{(a,B)\in E\}| = 1$ for each $a \in \mathcal{A}$, i.e., deterministic second half~\cite{PDCDSH}; or, where $\L(B\, |\, (a,B)\in E)$ is equivalent to a reduced version of $\L(A\, |\, (A, B)\in E)$, i.e., self-similar PDC; see~\cite[Section~3.5]{PDC}, see also Section~\ref{sect:selfsimilar}.  

\subsection{PDC deterministic second half}
\label{sect:pdcdsh}

In~\cite{PDCDSH}, a general framework is presented for random sampling using Algorithm~\ref{PDC procedure von Neumann} when $|\{(a,B)\in E\}| = 1$. 
The main algorithm in the discrete setting is Algorithm~\ref{PDC discrete} below, which also serves as an important special case to our main algorithm, Algorithm~\ref{PDC recursive method}, in Section~\ref{main:section}. 
We first introduce some notation. 

We shall always assume that $n$ is a large, finite positive integer. 
The set $I = \{i_1, i_2, \ldots\} \subset \{1,2,\ldots, n\}$ will denote some fixed, finite index set of positive integers. 
Given such an index set $I$, we define $\mathcal{A} \equiv \mathcal{A}_I = \R^{|I|}$, $\mathcal{B} \equiv \mathcal{B}_I = \R^{n-|I|}$, with 
 \[ \XI = (X_i)_{i \notin I} \in \mathcal{A}, \qquad \qquad X_I = (X_i)_{i \in I} \in \mathcal{B},\]
 and 
\[\EI := \{x \in \mathcal{A} : \exists y\in \mathcal{B} \mbox{ such that } (x,y)\in E\}.\]
We also define the function $\sigma_I : \R^{n-|I|} \times \R^{|I|}$ to be the operation which combines the elements in two vectors, say $x = (x_1,\ldots, x_{n-|I|})$ and $y = (y_1,\ldots, y_{|I|})$ in such a way that $z = \sigma_I( (x_1, \ldots, x_{n-|I|}), (y_1, \ldots, y_{|I|}) )$ is the (unique) permutation of size~$n$ such that the elements of $x$ and $y$ maintain their original order, with elements $z_{i_j} = y_j$, for $j=1,\ldots, |I|$. 
In other words, we wish to divide up the sample space $(X_1, \ldots, X_n)$ via the set~$I$, which will vary by example, work with $\XI = (X_i)_{i \notin I}$ and $X_I = (X_i)_{i \in I}$ separately, and then denote, e.g., the acceptance event as $\{ \sigma_I(\XI, X_I) \in E\}$. 

We shall also let $U$ denote a uniform random variable in the interval $(0,1)$, independent of all other random variables, and $u$ will denote a random variate generated from this distribution. 

\begin{algorithm}
\caption{\cite{PDCDSH} PDC deterministic second half for independent discrete random variables} 
\begin{algorithmic}
\Procedure {Discrete\_PDC\_DSH}{$X_1, X_2, \ldots, X_n, E, I$} \\
\State {\bf Assume:} $I = \{i\}$, for some $1 \leq i \leq n$.
\State {\bf Assume:} For each $\xI \in \EI$, there is a unique $\yI$ such that $\sigma_I(\xI,\yI) \in E$. \\ 
\State Let $\XI := (X_1, \ldots, X_{i-1}, X_{i+1},\ldots X_n)$. 
\State Sample from $\L(\XI)$, denote the observation by $\xI$.
\State Let $\yI$ denote the unique value such that $\sigma_I(\xI, \yI)\in E$.  
\If {$\xI \in \EI$ \text{ and } $u < \frac{P\left(X_i = \yI\right)}{\max_\ell P(X_i = \ell)}$ }
		\State \return $\sigma_I(\xI,\yI)$
\Else
\State \restart
\EndIf
\EndProcedure
\end{algorithmic} \label{PDC discrete}
\end{algorithm}

It is perhaps surprising that such a simple division, i.e., using $I = \{i\}$ for some $1 \leq i \leq n$ so that 
\[ \XI = (X_1, \ldots, X_{i-1}, X_{i+1}, \ldots, X_n) \qquad \mbox{and}\qquad X_I = (X_i),\]
 produces an \emph{automatic speedup} over hard rejection sampling, in terms of the expected number of rejections, at the cost of evaluating the probability mass function of $X_i$ and computing its maximum value. 
To see that this is indeed more efficient, consider the acceptance event for rejection sampling from $\X \in E$.  
Given any $\xI \in \EI$, let $\yI \equiv \yI(\xI)$ denote the unique value such that $\sigma_I(\xI, \yI)\in E$.  
   The acceptance event for hard rejection sampling can be written as 
\begin{equation} \label{WTGL discrete}\{ \text{$\XI \in \EI$ and $U < P(X_I = \yI)$}\}. \end{equation}
The acceptance event for Algorithm~\ref{PDC discrete} can be written as 
\begin{equation}\label{PDCTSH discrete}
\left\{ \text{$\XI \in \EI$ and $U < \frac{\P( X_I = \yI)}{\max_\ell \P(X_I = \ell)}$}\right\}.
\end{equation}
The added efficiency comes from accepting the sample \emph{in proportion} to the likelihood of the remaining uncertainty in~\eqref{PDCTSH discrete}, rather than its likelihood in~\eqref{WTGL discrete}. 
This approach, then, favors selecting indices $I$ for which the distribution of $X_I$ is not dominated by a single point mass, i.e., a small maximum point probability.  
There is a similar adaptation for continuous random variables, see~\cite{PDCDSH}, where in many situations of interest the default rejection sampling algorithm has an infinite expected wait time, and the analogous PDC deterministic second half algorithm has a finite expected wait time. 

\begin{remark}\label{default:remark}
{\rm All exact Boltzmann samplers which can be written in terms of~\eqref{distribution}, with components consisting of explicitly computable probability mass functions, can take advantage of Algorithm~\ref{PDC discrete}, as any selection of index $I$ is guaranteed to reduce the expected number of rejections, at the cost of what is often a simple and explicit arithmetic calculation.
}\end{remark}

\subsection{Self-similar PDC}
\label{sect:selfsimilar}
The PDC deterministic second half approach of the previous section, while offering a simple, guaranteed speedup in many cases of interest, can be improved if more knowledge of the distributions and conditioning event $E$ is available. 
As was noted earlier in Algorithm~\ref{PDC procedure von Neumann}, it is possible to sample from $\L(A\, |\, (A,B)\in E)$ by sampling from $\L(A)$ and applying an appropriate rejection.  
In many cases of interest, the division is such that the remaining part, $\L(B\, |\, (a,B) \in E)$ is equivalent to the original sampling problem with smaller values of parameters.  

Such an approach was utilized in~\cite{Alonso} for the exact random sampling of Motzkin words, yielding an overall asymptotically constant rejection rate. 
The general principle for the aforementioned application was developed independently in~\cite{PDC} and given the name self-similar PDC, and was used to produce an exact sampling algorithm for integer partitions with an overall asymptotic rejection rate of at most $2\sqrt{2}$. 

The main cost associated with this approach is the calculation of the rejection function $t(a)$, which was available for Motzkin words as the quotient of binomial coefficients, and for integer partitions due to the enumeration results of Hardy and Ramanujan~\cite{HR}, Rademacher~\cite{Rademacher}, and Lehmer~\cite{Lehmer}. 
In many other cases, especially when leaving the realm of fundamental combinatorial structures, such enumeration formulas are often not available. 

\section{PDC and the recursive method}
\label{main:section}

We now present the main algorithm for exact sampling via PDC and the recursive method.  
The combination of the recursive method and PDC is designed to control the size of the table required for the recursive method, while at the same time improve on the rejection probability of exact Boltzmann sampling and PDC deterministic second half, without requiring any complicated auxiliary calculations of rejection functions as discussed in Section~\ref{sect:selfsimilar}.
Also, for this section recall the notation that $U$ denotes a random variable with the uniform distribution over the unit interval $[0,1]$ and $u$ denotes a random variate generated from this distribution. 

To demonstrate the method, let us start by extending the PDC deterministic second half algorithm of Section~\ref{sect:pdcdsh}, so that two components are sampled in the second stage; i.e., let $I = \{1,2\}$, so that 
\[ \XI = (X_3, \ldots, X_n) \qquad \mbox{and} \qquad X_I = (X_1, X_2), \]
and we take $E = \{ \sum_{i=1}^n i\, Z_i = n\}$. 
Then $\EI = \{ \sum_{i=3}^n i\, Z_i \leq n\}$, and 
the acceptance event for the first stage of Algorithm~\ref{PDC procedure von Neumann} can be described by 
\begin{equation}
\label{small:accept}
\left\{ \text{$\XI \in \EI$ and $U < \frac{\P( X_1 + 2X_2 = y_I)}{\max_\ell \P(X_1 + 2X_2 = \ell)}$}\right\}.
\end{equation}
(In this setting, even though $y_I$ is uniquely determined, $X_I$ may not be, since the set $\{(x_1,x_2) : x_1 + 2x_2 = y_I\}$ may consist of a great many elements.)
Once this outcome is accepted, we then have the task of sampling from 
\begin{equation}\label{small:sample}\L\left( (X_1, X_2)\, \middle|\, X_1 + 2X_2 = \yI\right).\end{equation}
Note that this is a reduced problem, but not identical to the original problem, since $\yI$  is not guaranteed to be $2$. 

At this point, we pause to note that this approach requires two further tasks: \\
\begin{enumerate}
	\item Calculation of $\frac{\P( X_1 + 2X_2 = y_I)}{\max_\ell \P(X_1 + 2X_2 = \ell)}$ in~\eqref{small:accept}. \\
	\item Sampling from $\L\left( (X_1, X_2)\, \middle|\, X_1 + 2X_2 = \yI \right)$ in~\eqref{small:sample}.  \\
\end{enumerate}

In this small case, the two tasks above can often be handled by brute force and/or ad hoc methods; however, at this point we make a simplifying assumption on the joint distribution $(X_1, \ldots, X_n)$, one which is not necessary to apply PDC in general, but which is often satisfied in examples involving Boltzmann sampling and makes the utilization of the recursive method practical.

\begin{assumption}[Boltzmann Assumption]\label{Boltzmann:assumption}
Assume for each $I \subset \{1,\ldots, n\}$ and $\ell\geq 0$, the joint distribution $X_I$ is such that we have 
\begin{equation}\label{rI:equation} r_I(\ell) := \P(X_{i_1} = z_{i_1},  X_{i_2} = z_{i_2}, \ldots) \end{equation}
for any collection of constants $z_{i_1}, z_{i_2}, \ldots$ satisfying $\sum_{i\in I} i\, z_i = \ell$; i.e., $r_I(\ell)$ does not depend on the $z_{i_1}, z_{i_2}, \ldots$, only its weighted sum.  
Then, letting $a_I(\ell)$ denote the number of such collections, we may write  
\begin{equation}\label{equation:r0}
 \P\left( \sum_{i \in I} i\, X_i = \ell\right) = \sum_{z_{i_1},z_{i_2},\ldots : \sum_{i\in I} i\, z_i = \ell}\P(X_{i_1} = z_{i_1},  X_{i_2} = z_{i_2}, \ldots) = a_I(\ell)\, r_I(\ell).
 \end{equation}
In addition, we assume that the sequence $a_I(\ell)$, $\ell \geq 0$, satisfies a recursion which is amenable to applying the recursive method. 
\end{assumption}

\begin{remark}
{\rm Many Boltzmann samplers utilize tilting parameters, say $x$ and $\theta$, whose value does not affect the unbiased nature of the algorithm, and whose purpose is to optimize the probability that the target is hit.  
In terms of Assumption~\ref{Boltzmann:assumption}, this means that the righthand side of~\eqref{equation:r0} can be written as 
\[ a_I(\ell)\, r_I(\ell, x, \theta), \]
and the key property remains, which is that the probability of generating an object of a given weight depends only on the weight, and not on the particular component structure.  
Another particularly advantageous aspect of PDC is that once the first stage is performed, i.e., \emph{after} we have applied the rejection step and locked in the observation for $\XI$, we may subsequently adjust the tilting parameters for the second stage, choosing their values to optimize the completion of the remaining sampling algorithm; see~\cite[Section~4.3.1]{PDC}. }
\end{remark}

All of our examples will henceforth be assumed to satisfy Assumption~\ref{Boltzmann:assumption}, even if not explicitly stated. 
Generalizing this approach, for any $k \geq 1$ we next consider divisions of the form 
\[ \XI = (X_{k+1}, \ldots, X_n) \qquad \mbox{and} \qquad X_I = (X_1, \ldots, X_k), \]
and the acceptance event is given by, with $I = \{1, \ldots, k\}$, 
\begin{equation}
\label{two:accept}
\left\{ \text{$\XI \in \EI$ and $U < \frac{\P( \sum_{i=1}^k i\, X_i = \yI)}{\max_\ell \P(\sum_{i=1}^k i\, X_i = \ell)}$}\right\}.
\end{equation}
As stated previously, the main impediment for applying PDC to a chosen division is calculating the rejection probability, and sampling from the remaining conditional distribution,  and the recursive method solves both tasks!
To see this, let us rewrite~\eqref{two:accept} using Assumption~\ref{Boltzmann:assumption}:
\begin{equation}
\label{two:accept:2}
\left\{ \text{$\XI \in \EI$ and $U < \frac{a_I(\yI)\,r_I(\yI)}{\max_\ell a_I(\ell)\,r_I(\ell)}$}\right\}.
\end{equation}
Thus, in order to evaluate the acceptance event, we need to know the values of $a_I(\ell)$ for $\ell = 0,1,\ldots$, which can be obtained via a recursion on the sequence $a_I(\ell)$, as well as the values of $r_I(\ell)$, $\ell=0,1,\ldots$, which will be obtained from calculations derived from the particular combinatorial structure; see sections~\ref{sect:integer:partitions},~\ref{sect:set:partitions}, and~\ref{sect:three:classes} for explicitly worked out examples. 

The second part, i.e., sampling from $\L\left( X_I\, \middle|\, \sum_{i \in I} i\, X_i = \yI \right)$, is in fact precisely the distribution that the recursive method samples from using a table, we need only supply an appropriate recursion for the sequence $a_I(\ell)$, $\ell \geq 0$. 
Algorithm~\ref{PDC recursive method} below describes the procedure assuming one is able to create and randomly access such a table of values from the recursive method. 

\begin{algorithm}[H]
\caption{PDC with the recursive method} 
\begin{algorithmic}
\Procedure {PDC\_with\_Recursive\_Method}{$X_1, X_2, \ldots, X_n, E, I$}  \\
\State {\bf Assume:} $(X_1, \ldots, X_n)$ satisfies Assumption~\ref{Boltzmann:assumption}. 
\State {\bf Assume:} $E = \left\{\sum_{i=1}^n i\, X_i = n\right\}.$ \\

\State 1: Generate table $T$ via the recursive method, with $T(j) = a_I(j)$ for $j \geq 0$. 
\State\label{line:one} 2: Sample from $\L(\XI)$, denote the observation by $\xI$, and let $m = \sum_{i \notin I} i\, x_i$. 
\State 3: Let $\yI  = n-m$ and 
\[ t(\yI) = \frac{T(\yI)\, r_I(\yI)}{\max_\ell T(\ell)\, r_I(\ell)}. \]
\State 4: {\bf If} {$\xI \in \EI$ \text{ and } $u < t(\yI)$ } {\bf then}
	\State 5: \hskip .17in Generate $x_I$ from $\L(X_I\, |\, \sigma_I(\xI,X_I)\in E)$ via the recursive method. 
	\State 6: \hskip .17in Return $\sigma_I(x_I, \xI)$.
\State 7: {\bf else} 
\State 8: \hskip .17in Repeat
\State 9: {\bf endif} 
\EndProcedure
\end{algorithmic} \label{PDC recursive method}
\end{algorithm}

\begin{theorem}
Algorithm~\ref{PDC recursive method} generates an unbiased sample from the distribution~\eqref{distribution}.
\end{theorem}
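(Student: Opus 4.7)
The plan is to reduce the statement directly to the PDC Lemma (Lemma~\ref{PDC:lemma}) by identifying $A = \XI$ and $B = X_I$, and then verifying separately that (i) the soft-rejection loop in lines~\ref{line:one}--4 produces a sample $\xI$ whose distribution is $\L(\XI\mid \X\in E)$, and (ii) the recursive-method call in line~5 samples exactly from $\L(X_I \mid \sigma_I(\xI, X_I)\in E)$. Once both are established, Lemma~\ref{PDC:lemma} immediately yields that $\sigma_I(x_I, \xI)$ has distribution $\L(\X \mid \X \in E)$, which is~\eqref{distribution}.

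For (i), the key calculation is to recognize the acceptance probability $t(\yI)$ as a valid von Neumann rejection function. Conditionally on $\XI = \xI$, the probability that the completed sample lies in $E$ is
\[
\P(\X \in E \mid \XI = \xI) \;=\; \P\Bigl(\sum_{i\in I} i\, X_i = \yI\Bigr) \;=\; a_I(\yI)\, r_I(\yI),
\]
by Assumption~\ref{Boltzmann:assumption}. Since $T(\yI) = a_I(\yI)$ by the recursive construction in step~1, we have $t(\yI) = a_I(\yI)\, r_I(\yI) / M$, where $M := \max_\ell a_I(\ell)\, r_I(\ell)$ is a constant independent of $\xI$. Thus $t(\yI) \in [0,1]$, and the standard rejection-sampling argument (see~\cite{devroye}) gives that the density of an accepted sample at $\xI$ is proportional to
\[
\P(\XI = \xI)\cdot t(\yI) \;\propto\; \P(\XI = \xI)\cdot \P(\X \in E \mid \XI = \xI) \;=\; \P(\XI = \xI,\, \X \in E),
\]
which after normalization is exactly $\L(\XI \mid \X \in E)$. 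The degenerate case $\xI \notin \EI$ is handled automatically: then $\yI$ admits no representation of the form $\sum_{i\in I} i\, z_i$, hence $a_I(\yI) = 0$, so $t(\yI) = 0$ and the sample is rejected.

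For (ii), once $\xI$ is accepted, the target second-stage distribution is $\L(X_I \mid \sum_{i \in I} i\, X_i = \yI)$. Under Assumption~\ref{Boltzmann:assumption}, for each fixed weighted sum $\ell$ the conditional mass is uniform over the $a_I(\ell)$ admissible tuples, so the recursive method of Nijenhuis--Wilf applied to the sequence $a_I(\cdot)$ with the assumed recursion samples exactly from this conditional distribution; this is the standard correctness of the recursive method and we simply cite it. Feeding the two resulting random elements into Lemma~\ref{PDC:lemma} closes the proof.

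The only subtlety I would flag is the verification that $t(\yI) \le 1$ uniformly, which is where Assumption~\ref{Boltzmann:assumption} plays a genuinely important role: without the factorization $\P(\sum_{i\in I} i\, X_i = \ell) = a_I(\ell)\, r_I(\ell)$, the table $T$ built from $a_I$ alone would not yield a valid rejection function, since one would need $\P(\sum i X_i = \ell)$ rather than $a_I(\ell)$ in both the numerator and the normalizing maximum. The factorization is precisely what lets the combinatorial count $a_I$ cancel cleanly against the common weight $r_I$, so I expect the main writing challenge to be stating this cancellation carefully rather than any deep technical difficulty.
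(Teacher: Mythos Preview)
Your proposal is correct and follows essentially the same approach as the paper: identify $A=\XI$, $B=X_I$, verify that the soft rejection step yields $\L(\XI\mid \X\in E)$ and the recursive method yields $\L(X_I\mid \sigma_I(\xI,X_I)\in E)$, then invoke Lemma~\ref{PDC:lemma}. The paper's own proof is only three sentences and asserts each of these steps without the explicit calculations you supply; in particular, your verification that $t(\yI)\propto \P(\X\in E\mid \XI=\xI)$ via Assumption~\ref{Boltzmann:assumption}, and your observation that the conditional distribution on $X_I$ is uniform over the $a_I(\yI)$ admissible tuples (justifying the recursive-method call), fill in details the paper leaves implicit.
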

\begin{proof}
The rejection function $t(\yI)$ in Line~3 is defined such that once the algorithm reaches Line~5, the sample $\xI$ has distribution $\L(A\, |\, \sigma_I(A,B) \in E)$ for $A = \XI,$ $B = X_I$ and $E = \left\{\sum_{i=1}^n i\, X_i = n\right\}.$
The recursive method generates the remaining part of the sample according to the conditional distribution~$\L(B\, |\, \sigma_I(\xI,B) \in E)$. 
By Lemma~\ref{PDC:lemma}, $(x_I, \xI)$ is an exact sample from $\L((A,B)\, |\, \sigma_I(A,B) \in E)$. 
\end{proof}

\begin{remark}\label{remark:parallel}
{\rm An advantage of Algorithm~\ref{PDC recursive method} is that the generation of the table in Line~1 and the first stage of sampling in Line~2 can be performed concurrently, which is ideal when a large number of samples are desired. 
That is, while we are generating the table, we may generate concurrently the samples $\xI_1, \xI_2, \ldots$. 
Then, once the table is complete, applying rejection and sampling the remaining parts $x_I$ from the table is efficient. }
\end{remark}

\section{Cost of the algorithm}
\label{sect:costs}
The overall cost of the algorithm consists of 
\begin{enumerate}
\item the cost to sample from $\L(\XI)$, and the expected number of rejections before acceptance;
\item the cost to generate and store the table; 
\item the cost to generate a random object via the table.
\end{enumerate}

The cost to sample $\L(\XI)$ is one which we shall not analyze in great detail, except to point out that the na\"ive sampling of each coordinate of $\XI$ separately may not be optimal; see for example the discussions in sections~\ref{sect:integer:partitions} and \ref{sect:set:partitions}. 
Fortunately, the cost to sample $\L(\XI)$ is unrelated to the rejection cost, which is our main metric for algorithmic efficiency. 

The expected number of rejections in rejection sampling is given by (see~\cite{Rejection})
\[ \frac{\max_\ell {\P( (X_1, \ldots, X_n) \in E | \XI = \ell)}}{\P((X_1, \ldots, X_n) \in E)}. \]
That is, it is the quotient of the overall probability of landing in the target, with an added boost from soft rejection sampling. 
We define the \emph{boost factor} of a PDC algorithm as the inverse of this maximal probability, i.e., 
\[ \mbox{boost factor} = \frac{1}{\max_\ell {\P( (X_1, \ldots, X_n) \in E | \XI = \ell)}}. \]
To summarize, whereas the expected number of rejections in exact Boltzmann sampling is 
\[ \frac{1}{\P((X_1, \ldots, X_n) \in E)}, \]
using PDC we obtain an expected number of rejections which is
\[ \frac{\max_\ell {\P( (X_1, \ldots, X_n) \in E | \XI = \ell)}}{\P((X_1, \ldots, X_n) \in E)}. \]

The arithmetic cost to generate a table via the recursive method, and to generate a random object from that table, has been studied previously, see for example~\cite{denise1999uniform} and the references therein, and so we refer the interested reader to their treatment. 

Finally, we note that the degree to which the combination of PDC and the recursive method is an improvement overall depends on the choice of the PDC division, which we now highlight with specific examples. 

We next introduce several standard definitions regarding the order of growth of a function. 
For two real-valued functions $f$ and $g$ and $n$ a real number, 
we say $f(n) = O(g(n))$ if and only if there are constants $C>0$ and $n_0 > 0$ such that $\frac{f(n)}{g(n)} \leq C$ for all $n \geq n_0$.  
We say $f(n) = \Omega(g(n))$ if and only if there are constants $C>0$ and $n_0 > 0$ such that $\frac{f(n)}{g(n)} \geq C$ for all $n \geq n_0$.  
Finally, we say $f(n) \sim g(n)$ if and only if $\lim_{n\to\infty} \frac{f(n)}{g(n)} = 1$. 

\section{Example 1: integer partitions}
\label{sect:integer:partitions}
\subsection{Unrestricted integer partitions}
An integer partition of size~$n$ is a collection of unordered positive integers which sum to $n$; we denote the total number of integer partitions of size~$n$ as $p(n)$. 
Let $Z_1(x), Z_2(x), \ldots$ denote a collection of \emph{independent} geometric random variables, with $\e Z_i(x) = 1-x^i$ for any $0<x<1$, $i=1,2,\ldots,n$.  
Letting $T_n \equiv T_n(x) := \sum_{i=1}^n i\, Z_i(x)$ denote the sum of the independent random variables, we have 
\begin{equation}\label{Tn} \P(T_n = n) = \sum_{z_1 + 2z_2 + \ldots n z_n = n} \P(Z_1 = z_1, \ldots, Z_n = z_n) = p(n)\ts x^n \prod_{i=1}^n (1-x^i). \end{equation}
Hence, our collection $(Z_1, \ldots, Z_n)$ satisfies Assumption~\ref{Boltzmann:assumption} with $r_I(\ell) = x^\ell \prod_{i\in I} (1-x^i)$, and, conditional on the weighted sum of the independent random variables equalling the target $n$, and interpreting $Z_i$ as the number of parts of size~$i$ in an integer partition, each of the $p(n)$ integer partitions of $n$ are equally likely to have been chosen. 
Since we can choose any $x$ between 0 and 1, an optimal choice which maximizes $\P(T_n=n)$ (see e.g., \cite{IPARCS, Fristedt, Temperley}) is $x = e^{-\pi/\sqrt{6n}}.$ 

The exact Boltzmann sampler samples from $(Z_1, Z_2, \ldots, Z_n)$ repeatedly until the event $\sum_{i=1}^n i\, Z_i = n$ is satisfied.  It is known, see~\cite{Fristedt}, that with the choice $x = e^{\pi/\sqrt{6n}}$, we have 
\[ \P(T_n = n) \sim \frac{1}{\sqrt[4]{96n^3}}, \]
and so we reject an expected $O(n^{3/4})$ samples before we obtain an integer partition of exactly size~$n$. 

Using PDC deterministic second half, an optimal choice of division is given by $\XI = (Z_2, \ldots, Z_n)$ and $X_I = (Z_1)$, with a boost factor of
\[ \mbox{boost factor} = \frac{1}{\max_k \P(Z_1 = k)} = \frac{1}{1-x} \sim \frac{\sqrt{6n}}{\pi}, \]
whence the overall total number of times we must sample from the distribution~$\L(\XI)$ is thus $O(n^{1/4})$, a noteworthy speedup; see~\cite{PDC}. 
In addition, since the geometric distribution has point probabilities which are monotonically decreasing, i.e., $\P(Z_i = k) \geq \P(Z_i = k+1)$ for all $k \geq 0$ and $i \geq 1$, the maximum point probability occurs at $k=0$, and so the acceptance event~\eqref{PDCTSH discrete} is simply 
\[ \left\{ \sum_{i=2}^n i\, Z_i \leq n \mbox{ and } U < e^{-\pi\, (n-\sum_{i=2}^n i\, Z_i)/\sqrt{6n}}\right\}, \]
where we recall that $U$ is a uniform random variable in the interval $(0,1)$.  Note that this division is optimal in choice of index $i=1$, since $\P(Z_i = 0)  = 1-x^i \geq 1-x$ for all $i = 1,2,\ldots,$ with equality for $i=1$.  

Extending the previous division, we next consider $\XI = (Z_{k+1}, Z_{k+2}, \ldots, Z_n)$ and $X_I = (Z_1, \ldots, Z_{k})$ for any $1 \leq k \leq n$. 
Then we have 
\[ \mbox{boost factor} = \frac{1}{\max_{1\leq \ell\leq n} \P(\sum_{i=1}^k i\, Z_i(x) =\ell)}.  \]
Fortunately, the extensive work in asymptotic enumeration surrounding the integer partition function and its many variations is applicable, in particular~\cite{Szekeres1, Szekeres2}, which implies that $\sum_{i=1}^k i\, Z_i$ is close to a normal distribution with maximum density asymptotically $O(\sqrt{k\, n})$ for $k = O(\sqrt{n})$. 
Thus, if we demand an expected number of rejections which is $O(n^a)$, for some $0\leq a\leq \frac{1}{4}$, then we may take any $k = \Omega\left(n^{\frac{1}{2}-2a}\right)$. 
On the other hand, if we are only willing to store a table of size $n \times O(n^b)$, for some $0 \leq b \leq \frac{1}{2}$, the expected number of rejections is then $\Omega\left(n^{\frac{1}{4} - \frac{b}{2}}\right)$ and $O\left(n^{\frac{1}{4}}\right).$ 

\begin{remark}
{\rm The case when $k = 1$ is PDC deterministic second half, whereas the case $k = \sqrt{n}$ implies a constant rejection probability, at the cost of creating an $n \times O(\sqrt{n})$ table. 
A ``middle" ground might be $k = n^{1/4}$, with a table of size $n \times O(n^{1/4})$ and an expected number of rejections $O(n^{1/8})$.  }
\end{remark}

Let us make this example even more explicit, in order to highlight its practicality. 
Recall that the number of integer partitions of $n$ into parts of size at most $k$ satisfies the recursion~\eqref{pnk:recursion}, from which we have calculated a table for values of $p(n,k)$ for  $n$ and $k$ between 1 and 10 below.  (Note: the diagonal entries are precisely $p(n)$ for $n=1,2,\ldots, 10$.)
\begin{center}
\begin{tabular}{lll}
\hline
{\tt 1 1 1 1 1\ \  1\ \ 1\ \  1\ \  1\ \  1 }\\
{\tt 1 2 2 3 3\ \  4\ \ 4\ \  5\ \  5\ \ 6 }\\
{\tt 1 2 3 4 5\ \  7\ \ 8\  10 12 14 }\\
{\tt 1 2 3 5 6\ \  9\ 11 15 18 23 }\\
{\tt 1 2 3 5 7 10 13 18 23 30 }\\
{\tt 1 2 3 5 7 11 14 20 26 35 }\\
{\tt 1 2 3 5 7 11 15 21 28 38 }\\
{\tt 1 2 3 5 7 11 15 22 29 40 }\\
{\tt 1 2 3 5 7 11 15 22 30 41 }\\
{\tt 1 2 3 5 7 11 15 22 30 42 }\\
\hline
\end{tabular}
\end{center}

We can sample a uniformly random integer partition of size~$10$ via the recursive method as follows:
looking at the final column, one generates a uniform integer between 1 and 42, say 27, which determines that the largest part is 5 since 27 lies between the values in the 4th and 5th rows.  Shifting to the 5th column, we either generate a random integer between 1 and 7, or continue to use our original value of 27 subtracted by the cutoff value of 23 in the fourth row of the tenth column.  This leaves us with the value 4, and we repeat the process in this 5th column, selecting the next largest part as 3 since $4$ lies between the value in the second and third rows.  Shifting again now to column 2, and subtracting 4 by the value in the second row of the fifth column, we obtain a 0, which means that we fill out the rest of the partition with 1s.
Thus, our partition of 10 generated in this manner is 5, 3, 1, 1. 

We now demonstrate how to apply Algorithm~\ref{PDC recursive method} to integer partitions. 
We consider the vector $(Z_1, \ldots, Z_n)$ describing an integer partition of size~$n$, and with some $k$ specified, we use the PDC division $\XI = (Z_{k+1}, Z_{k+2}, \ldots, Z_n)$ and $X_I = (Z_1, \ldots, Z_{k})$. 
The PDC algorithm is then
\begin{enumerate}
\item[(1)] Generate a table $T$ of values of $p(i,j)$ for $0 \leq i \leq k$ and $0 \leq j \leq n.$  Denote the entries in the final row by $T(j) = p(k,j)$, $j \geq 0$.  
\item[(2)] Sample from $\L(Z_{k+1}, Z_{k+2}, \ldots, Z_n)$, say observing $(z_{k+1},z_{k+2},\ldots,z_n)$, with weight $m := \sum_{i=k+1}^n i\, z_i$. 
\item[(3)] Let $\yI := n-m$.  We accept the sample with probability 
\[ t(a) = \frac{p(k, m) x^m}{\max_\ell p(k,\ell)\, x^\ell} = \frac{T(m)\, x^m}{\max_\ell T(\ell)\, x^\ell}. \]
\item[(4)] Sample from $\L\left(Z_1, \ldots, Z_{k})\, \middle|\, \sum_{i=1}^{k} i\, Z_i = \yI\right)$ from the table $T$ using the recursive method.
\end{enumerate}

For example, let us take $n=10$ and $I = \{1,2,3\}$, i.e., $k=3$.  
Rather than make a full $n \times n$ table of values, we instead only need the first three rows. 
\begin{center}
\begin{tabular}{lll}
\hline
{\tt 1 1 1 1 1\ \  1\ \ 1\ \  1\ \  1\ \  1 }\\
{\tt 1 2 2 3 3\ \  4\ \ 4\ \  5\ \  5\ \ 6 }\\
{\tt 1 2 3 4 5\ \  7\ \ 8\  10 12 14 }\\
\hline
\end{tabular}
\end{center}

The algorithm is then to sample from $(Z_4, \ldots, Z_{10})$, a vector of \emph{independent} geometric random variables, and then reject depending on the value of $\sum_{\ell=4}^{10} \ell Z_\ell$.  Let's say we observed $(z_4, \ldots, z_{10}) = (1, 0,0,0,0,0,0)$ for this first step, which corresponds to one part of size~$4$, and no parts of larger size.  
The rejection probability is then given by 
\[ t(a) = \frac{p(3,6)x^6}{\max_\ell p(3,\ell)x^{\ell}} = \frac{T(6)\, x^6}{\max_\ell T(\ell)\, x^\ell}. \]
Taking $x = e^{-\pi/\sqrt{60}}$, and multiplying each entry in the $j$th column by $x^j$, we obtain the following floating point values for the last row in the table above
\begin{center}
\begin{tabular}{ll}
Columns 1 -- 5: & 0.666591\ \ 0.888688\ \ 0.888588\ \ 0.789767\ \ 0.658065 \\
Columns 6 -- 10: & 0.614125\ \ 0.467852\ \ 0.389833\ \ 0.311831\ \ 0.242508. \\
\end{tabular}
\end{center}
The rejection probability is thus 
\[ t(a) = \frac{0.614125}{0.888688} = 0.691047. \]
Suppose we accept this sample (otherwise we would resample $(Z_4, \ldots, Z_{10})$ and apply rejection as before), then we complete the partition of size~$6$ \emph{into parts of size at most 3} by sampling from an integer between 1 and 7 and applying the recursive method starting in the 6th column. 

We end our discussion of this example with a suggestion for sampling efficiently from $\XI = (X_{k+1}, X_{k+1}, \ldots, X_n)$ for any $k \geq 0$.  
One could sample each $X_i$ via a uniform random variable $U_i$ over the unit interval $(0,1)$, and apply the transformation $\left\lfloor \frac{\ln(U_i)}{i \ln(x)}\right\rfloor$ to obtain a random variate with distribution $\L(X_i)$, $i=k+1,\ldots, n$. 
However, this requires generation of $n-k$ uniform random variables. 
It was shown in~\cite[Section~5]{PDC}, however, that the entropy in $\XI$ is $O(\sqrt{n})$ for any $k \geq 0$, and a Poisson process sampling procedure was specified which is asymptotically efficient. 
In fact, it is not difficult to show that when $k = \Omega(n^{1/2+\epsilon})$ for any $\epsilon>0$, the entropy of $\XI$ is $O(1)$, whereas the na\"ive sampling algorithm would still generate $n-k$ uniform random variables. 

\subsection{Integer partitions into distinct parts}
An example where PDC \emph{deterministic second half} is limited is the case when the random variables $Z_1, Z_2, \ldots$ are Bernoulli, which is a special case of combinatorial selections; see Section~\ref{sect:selections}. 
However, the analogous PDC with the recursive method provides a more significant improvement. 

Consider, for example, integer partitions into distinct part sizes.  I.e., we take $Z_i(x)$ to be a Bernoulli random variable with parameter $\frac{x^i}{1+x^i}$, $i=1,2,\ldots$, and any $0<x<1$.  (We could also consider equivalently the geometric random variables of the previous section conditioned to be in the set $\{0,1\}$.)  
Then, similarly as with unrestricted integer partitions, conditional on $T_n := \sum_{i=1}^n i\, Z_i(x) = n$, $Z_i$ denotes the number of parts of size~$i$ in a uniform integer partition of size~$n$ into distinct parts. 
It was shown in~\cite{Fristedt} that, taking $x = e^{-\pi/\sqrt{12\, n}}$, we optimally have 
\[ \P(T_n = n) \sim \frac{1}{\sqrt[4]{192\, n^3}}. \]

As per Remark~\ref{default:remark}, the first approach to speeding up the rejection probability is to take $\XI = (Z_2, \ldots, Z_n)$ and $X_I = (Z_1)$. 
Unfortunately, the boost factor in this setting is limited, since $\P(Z_i = 0) = \frac{x^i}{1+x^i} \geq \frac{1}{2}$ for all $i \geq 1$, with $i=1$ giving a paltry optimal boost factor of at most $2$. 
Using PDC with the recursive method, however, we obtain similar boost factors as in the unrestricted case. 

Note first we have a similar recursion.  Letting $q(n,k)$ denote the number of partitions of $n$ into distinct parts all at most $k$, we have 
\[ q(n,k) = q(n-k,k-1) + q(n,k-1), \qquad 1 \leq k \leq n, \]
with $q(k,0) = 1$ when $k \geq 0$, $q(k,n) = q(n,n)$ when $k > n$, and $q(k,n) = 0$ otherwise. 
This recursion is similar to the one for unrestricted integer partitions in~\eqref{pnk:recursion}, but since we can have at most one part of each size, if we choose to use a part of that size we must also transition from $k$ to $k-1$.  
The rest of the details are similar to the previous section, and are left as an exercise.

\section{Example 2: set partitions}
\label{sect:set:partitions}
A partition of a set $\{1,2,\ldots, n\}$ of size~$n$ is a disjoint union of sets whose union is $\{1,2,\ldots,n\}$.  The sets are called blocks, and the number of elements in a given block is called the block size. 
There is a natural mapping (surjection) from the block sizes of a set partition of size~$n$ to the part sizes of an integer partition of size~$n$. 
There is also an analogous sampling algorithm, with key differences.

For any $x>0$, let $Z_1(x), Z_2(x), \ldots$ denote a collection of independent Poisson random variables, with $\e Z_i(x) = \lambda_i = \frac{x^i}{i!}$,  for $i=1,2,\ldots,n$.  
Random variable $Z_i(x)$ counts the number of blocks of size~$i$ in a random set partition of random size, $i=1,2,\ldots$. 
The number of set partitions of size~$n$ is known as the $n$-th Bell number, often denoted by $B_n$, and satisfies the following recurrence: 
\begin{equation}\label{Bell:recursion} B_n = \sum_{i=0}^{n-1} \binom{n-1}{i} B_{i}, \qquad n \geq 2, \end{equation}
with $B_0 = B_1 = 1$. 
Let $T_n(x) = \sum_{i=1}^n i\, Z_i(x).$ 
We have for $z_1, \ldots, z_n$ satisfying $\sum_{i=1}^n i\, z_i = n$ (see e.g.,~\cite{IPARCS}),
\[ \P(Z_1 = z_1, \ldots, Z_n = z_n) =  \frac{x^n}{n!}\, \exp\left(-\sum_{i=1}^n \frac{x^i}{i!}\right), \]
whence
\[ \P(T_n(x) = n) = B_n\,  \frac{x^n}{n!}\, \exp\left(-\sum_{i=1}^n \frac{x^i}{i!}\right), \]
and so we see that Assumption~\ref{Boltzmann:assumption} is satisfied. 

It was shown in~\cite{PittelSetPartitions}, see also~\cite{MoserWymanBell}, that with $x$ satisfying $x e^x = n$, we have 
\[ \P(T_n(x) = n) \sim \frac{1}{\sqrt{2\pi n(x+1)}}. \]
Note that $x \sim \log(n)$ (see~\cite{deBruijn} for more terms in the asymptotic expansion), and so the exact Boltzmann sampling algorithm to obtain the block sizes of a uniformly generated set partition of size~$n$ has an expected $O(\sqrt{n \log n})$ number of rejections. 

It was shown in~\cite[Section 3.3.1]{PDC} that using PDC deterministic second half with $\XI = (Z_1, Z_2, \ldots, Z_{[x]-1}, Z_{[x]+1}, \ldots Z_n)$ and $X_I = (Z_{[x]})$, one obtains an optimal boost factor of 
\[ \mbox{boost factor} = \frac{1}{\max_\ell \P(Z_{[x]} = \ell)} \sim O\left(\frac{\sqrt{n}}{\log^{3/4}(n)}\right), \]
for an overall expected number of rejections of $O(\log^{5/4}(n))$. 

For this example, let us explore the recursive method on the recursion in~\eqref{Bell:recursion}. 
It was shown in~\cite[Algorithm~S]{NW} how to obtain a sampling algorithm using this recursion. 
Specifically, we first generate a new block size $n-K$ using random variable $K$ with distribution 
\[ \P(K = k) = \binom{n-1}{k} \frac{B_k}{B_n}, \qquad 0 \leq k \leq n-1, \]
which generates a given block size in its correct proportion with respect to all set partitions containing at least one block of size $n-K$.
Then we randomly sample a set of elements to place inside the block, and continue recursively with the remaining elements. 
A straightforward calculation, see~\cite{NW}, shows that a set partition generated in this way is uniform over all set partitions of size~$n$. 
We now apply Algorithm~\ref{PDC recursive method} in this setting. 

Using the heuristic from~\cite{IPARCS}, 
for some $\alpha > 0$ we choose index set 
\begin{align*}
I = \{ [x-\alpha\sqrt{x}], [x-\alpha\sqrt{x}]+1, \ldots, [x], \ldots, [x+\alpha\sqrt{x}]-1, [x+\alpha\sqrt{x}]\},
\end{align*}
where recall $x$ is the solution to $x e^x = n$, or approximately $\log(n)$. 

To sample from $\XI$, we recommend simulating a Poisson process over the interval $[0,\sum_{i \notin I} \lambda_i]$, assigning a value to $Z_\ell$ based on the number arrivals in the corresponding interval of length $\lambda_\ell$, $\ell \in \{1,\ldots,n\} \setminus I$. 
The expected number of uniform random variables in the unit interval required to run such a Poisson process to completion is given by  
\[ s(n) := \sum_{i \notin I} \lambda_i = \sum_{i=1}^{x-\alpha\sqrt{x}} \frac{x^i}{i!} + \sum_{i=x+\alpha\sqrt{x}}^{n} \frac{x^i}{i!}. \]
Let $c_\alpha:=\P(\mbox{Normal}(0,1) \geq \alpha)$ denote the tail probability of a standard normal random variable at cutoff value $\alpha$, and let Po$(x)$ denote a Poisson random variable with mean $x$.  
We have
\[ \sum_{i=1}^{x-\alpha\sqrt{x}} \frac{x^i}{i!} = e^x \left( \P(\mbox{Po}(x) \leq x-\alpha\sqrt{x})\right) - 1 \sim \frac{n}{x}\ \P\left(\mbox{Normal}(0,1) \leq - \alpha\right) \sim c_\alpha\, \frac{n}{x}; \]
\[ \sum_{x+\alpha\sqrt{x}}^n \frac{x^i}{i!} = e^x \left( \P(\mbox{Po}(x) \geq x+\alpha\sqrt{x})\right) - 1 \sim \frac{n}{x}\ \P\left(\mbox{Normal}(0,1) \geq \alpha\right) \sim c_\alpha\, \frac{n}{x}. \]
Thus, to sample $\XI$ using a Poisson process in this manner requires the generation of $s(n) = O(n/\log(n))$ uniform random variates in the unit interval.  

Next, we compute the expected value of the weighted sum over indices in $I$, viz.,
\[ \sum_{i \in I} i\, \lambda_i = x e^x \P(\mbox{Po}(x) \in [x-\alpha\sqrt{x}, x+\alpha\sqrt{x}]) \sim n(1-2c_\alpha), \]
and the standard deviation
\[ \sqrt{\sum_{i \in I} i^2\, \lambda_i} \sim \sqrt{ n(x+1)(1-2c_\alpha)}. \]
Finally, to estimate the rejection probability, we assume $\sum_{i \in I} i\, Z_i$ approximately satisfies a local central limit theorem, which yields 
\[ \frac{\max_\ell \P\left(\sum_{i\in I} i\, Z_i = \ell\right)}{\P\left(\sum_{i=1}^n i\, Z_i = n\right)} \sim \frac{1}{\sqrt{1-2c_\alpha}} =  O(1). \]

The next step of the algorithm is to make a table.  
Instead of a table generated from the recursion in~\eqref{Bell:recursion}, as was the original approach in~\cite{NW}, we consider the number of set partitions of $n$ into blocks of sizes in the set~$I$, since we shall be sampling from the random variables in $\XI$ directly first. 
That is, we require the generalization of the recursive method in~\cite[Postscript:~deux ex machina]{NWBook}, where the ``primes" are the elements in $I$.  
Let $p_I(n)$ denote the number of set partitions of $n$ into blocks of sizes in the set~$I$. 
By appealing to generating functions or recursions, see for example~\cite[Section~9.4]{IPARCS}, one obtains (for \emph{any} $I \subset \{1,2,\ldots, n\}$)
\[ p_I(n) = \sum_{i \in I} \binom{n-1}{i} p_I(i), \]
with $p_I(0) = 1$. 
Thus, the recursion above can be used to make a table which contains the quantities necessary to define the rejection probability, as well as complete the sample using the recursive method.

\section{Generalizations}
\label{sect:three:classes}

\subsection{A general probabilistic principle} 
\label{sect:iparcs}

For any $n >0$, consider an index set $I = \{i_1, i_2, \ldots \} \subset \{1,2,\ldots,n\}$. 
Let $\bfa = (a_{i_1}, a_{i_2}, \ldots),$ be a sequence of nonnegative integers, and let $\bfw = (w_{i_1}, w_{i_2}, \ldots)$ denote nonnegative real-valued weights.  
Let $N(n,\bfa, \bfw)$ denote the number of objects of weight~$n$ having $a_i$ components of size~$w_i$, $i \in I$.
Summing over all $i\in I$ gives the total weight $n = \sum_{i\in I} w_i\,a_i = {\bf w} \cdot {\bf a}$ of the object, where ${\bf w} \cdot {\bf a}$ is the usual dot product on two vectors of the same dimension.   
The examples of interest will have the following form:
\[ N(n,\bfa, \bfw) := \mathbbm{1}(\bfw\cdot \bfa = n) f(I, n) \prod_{i\in I} g_i(a_i), \]
for some functions $f$ and $g_i$, $i\in I$, with
\[ p_I(n) := \sum_{\bfa} N(n,\bfa,\bfw) \]
denoting the total number of objects of weight $n$.  

Suppose now we place a uniform distribution over the corresponding set of $p_I(n)$ combinatorial objects.  Then the number of components of size~$i$ is a random variable, say with distribution $C_i$, $i\in I$, and $C_I = (C_{i_1}, C_{i_2} \ldots)$ is the joint distribution of \emph{dependent} random component-sizes that satisfies $C_I \cdot \bfw= n$.  The distribution of $C_I$ is given by
\begin{equation}\label{C distribution}
 \P(C_I = \bfa) = \mathbbm{1}(\bfw \cdot \bfa = n) \frac{f(I,n)}{p_I(n)} \prod_{i\in I} g_i(a_i).
 \end{equation}
For each $x>0$, let independent random variables $Z_i$, $i\in I$, have distributions
\begin{equation}\label{Z distribution}
 \P(Z_i = k) = c_i(x)\, g_i(k)\, x^{w_ik}, \qquad i\in I,
 \end{equation}
where $c_i$, $i\in I$, are the normalization constants, given by
\[ c_i = \left( \sum_{k \geq 0} g_i(k) x^{w_ik} \right)^{-1}. \]
Now we can state the following theorem.

     \begin{theorem}{\cite{IPARCS}}
Assume $I \subset \{1,\ldots, n\}$.  
Let $C_I = (C_i)_{i\in I}$ denote the joint distribution of random component-sizes with distribution given by Equation~\eqref{C distribution}. 
Let $Z_I = (Z_i)_{i\in I}$ denote a vector of independent random variables with distributions given by Equation~\eqref{Z distribution}, and define $T_I = \sum_{i \in I} w_i\ts Z_i$.  Then
\[ \L(C_I) = \L(Z_I | T_I = n). \]
Furthermore, we have 
\begin{equation}\label{Tn}
 \P(T_I = n) = p_I(n) \frac{x^n}{f(I,n)} \prod_{i\in I} c_i(x). 
 \end{equation}
     \end{theorem}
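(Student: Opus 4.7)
The plan is a direct calculation in two moves: first compute $\P(T_I = n)$ in closed form using independence, and then divide to get the conditional distribution and match it against \eqref{C distribution}.

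First I would write down the joint mass function of $Z_I$. By the independence assumed in \eqref{Z distribution},
\[
\P(Z_I = \bfa) \;=\; \prod_{i\in I} c_i(x)\, g_i(a_i)\, x^{w_i a_i} \;=\; \Bigl(\prod_{i\in I} c_i(x)\Bigr)\, x^{\bfw\cdot\bfa}\, \prod_{i\in I} g_i(a_i).
\]
The key observation is that on the event $\{T_I = n\}$, i.e.\ $\bfw\cdot\bfa = n$, the factor $x^{\bfw\cdot\bfa}$ collapses to the constant $x^n$, stripping all dependence of this mass function on $\bfa$ except through $\prod_i g_i(a_i)$.

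Next I would compute $\P(T_I = n)$ by summing over the admissible $\bfa$:
\[
\P(T_I = n) \;=\; \sum_{\bfa:\,\bfw\cdot\bfa = n} \P(Z_I = \bfa) \;=\; \Bigl(\prod_{i\in I} c_i(x)\Bigr)\, x^n \sum_{\bfa:\,\bfw\cdot\bfa=n} \prod_{i\in I} g_i(a_i).
\]
By the assumed form $N(n,\bfa,\bfw) = \mathbbm{1}(\bfw\cdot\bfa = n)\, f(I,n) \prod_{i\in I} g_i(a_i)$ and the definition $p_I(n) = \sum_\bfa N(n,\bfa,\bfw)$, the remaining sum equals $p_I(n)/f(I,n)$. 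Substituting yields exactly \eqref{Tn}.

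Finally, to verify the distributional identity I would take $\bfa$ with $\bfw\cdot\bfa = n$ and form the ratio
\[
\P(Z_I = \bfa \mid T_I = n) \;=\; \frac{\P(Z_I = \bfa)}{\P(T_I = n)} \;=\; \frac{\bigl(\prod_{i\in I} c_i(x)\bigr)\, x^n \prod_{i\in I} g_i(a_i)}{p_I(n)\, x^n\, f(I,n)^{-1}\, \prod_{i\in I} c_i(x)} \;=\; \frac{f(I,n)}{p_I(n)}\, \prod_{i\in I} g_i(a_i),
\]
which matches \eqref{C distribution} on its support; both sides vanish when $\bfw\cdot\bfa \neq n$. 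There is no real obstacle here—the argument is a bookkeeping exercise—but the one subtlety worth stating explicitly is that the tilting parameter $x$ and the normalization constants $c_i(x)$ cancel against those in $\P(T_I=n)$, so the conditional law is independent of $x$, as it must be since $\L(C_I)$ is an intrinsic combinatorial object. This cancellation is precisely what makes \eqref{Z distribution} a legitimate family of Boltzmann tiltings for sampling $C_I$.
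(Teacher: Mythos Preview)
Your proof is correct and is precisely the standard argument. The paper does not supply its own proof of this theorem; it is quoted from \cite{IPARCS} and stated without proof, so there is nothing further to compare.
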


Immediately, we see that Assumption~\ref{Boltzmann:assumption} is satisfied, and that the corresponding hard rejection sampling algorithm for sampling from $\L(Z_I | T_I=n)$ has an expected number of rejections which is $\P(T_I=n)^{-1}$, given in~\eqref{Tn}. 
The PDC deterministic second half improvement can be applied to any index $j \in I$, with speedup given by 
\[ \speedup = O\left( \left(\max_{k} c_j(x)\, g_j(k)\, x^{w_jk}\right)^{-1}\right), \]
and if possible one should choose $j$ such that this speedup is maximized, even though by Remark~\ref{default:remark} \emph{any} choice of $j$ will provide a speedup. 
In order to show how to apply Algorithm~\ref{PDC recursive method}, we specialize to three standard classes below.

\subsection{Selections}
\label{sect:selections}
Integer partitions of size~$n$ into distinct parts is an example of a selection: each element $\{1,2,\ldots,n\}$ is either in the partition or not in the partition.  
Selections in general allow $m_i$ different types of a component of type~$i$.  For integer partitions, this would be similar to assigning $m_i$ colors to integer $i$, and allowing at most one component of size $i$ of each color.  
We have for all $0<x<1$ and $i \in I$, 
\[ \P(Z_i = k) = \binom{m_i}{k} \left(\frac{x^i}{1+x^i}\right)^k\left(\frac{1}{1+x^i}\right)^{m_i-k}, \qquad 0 \leq k \leq m_i,\]
which is binomial.  
Letting $p_I(n)$ denote the number of such combinatorial selections of weight~$n$, we have 
\[ \P(T_I = n) = p_I(n)\ts x^n \prod_{i\in I} (1+x^i)^{m_i}. \]
The recursion given in~\cite[Equation~(158)]{IPARCS} yields 
\[ k\, p_I(k) = \sum_{i=1}^k g_I(i)  p_I(k-i), \qquad k=1,2,\ldots, \]
where 
\[ g_I(i) = - x^i\sum_{k\, |\, i} k\, m_k (-1)^{i/k}\mathbbm{1}(k \in I), \]
and 
\[ p_I(0) = 1, \] 
so that Assumption~\ref{Boltzmann:assumption} is satisfied.

\subsection{Multisets}

Unrestricted integer partitions of size~$n$ is an example of a multiset: each element $\{1,2,\ldots, n\}$ can appear any number of times in the partition.  Multisets in general allow $m_i$ different types of a component of type~$i$, similar to selections.  We have for all $0<x<1$
\[ \P(Z_i = k) = \binom{m_i+k-1}{k} (1-x^i)^{m_i}x^{i k},\qquad k=0, 1,\ldots, \]
which is negative binomial.  
Letting $p_I(n)$ denote the number of such combinatorial multisets of weight~$n$, we have 
\[ \P(T_I = n) = p_I(n)\ts x^n \prod_{i\in I} (1-x^i)^{m_i}. \]
The recursion given in~\cite[Equation~(157)]{IPARCS} yields 
\[ k\, p_I(k) = \sum_{i=1}^k g_I(i)  p_I(k-i), \qquad k=1,2,\ldots, \]
where 
\[ g_I(i) = x^i\sum_{k\, |\, i} k\, m_k \mathbbm{1}(k \in I), \]
and 
\[ p_I(0) = 1, \] 
so that Assumption~\ref{Boltzmann:assumption} is satisfied.

\subsection{Assemblies}
\label{sect:assemblies}
Assemblies are described using $Z_i$ as Poisson$(\lambda_i)$, where $\lambda_i = \frac{m_i x^i}{i!}$, $i=1,\ldots,n$, and where $m_i$ is the number of different types of a component of type $i$, $i \in I$, and $x>0$.  
Set partitions are an example of an assembly, with $m_i = 1$ for all $i \in I = \{1,2,\ldots,n\}$.
In general, we have
\begin{equation}\label{Poisson prob}
 \P(Z_1 = c_1, \ldots, Z_n = c_n) = \prod_{i=1}^n \frac{ m_i^{c_i} x^{i\, c_i}}{i!^{c_i} c_i!} e^{-\lambda_i} = x^{n} e^{-\sum_{i=1}^n \lambda_i} \prod_{i=1}^n \frac{m_i^{c_i}}{i!^{c_i} c_i!}.
 \end{equation}
Letting $p_I(n)$ denote the number of such combinatorial assemblies of weight~$n$, we have 
\[ \P(T_I = n) = p_I(n)\ts \frac{x^n}{n!} \exp\left(-\sum_{i \in I} \frac{m_i x^i}{i!}\right). \]
The recursion given in~\cite[Equation~(153)]{IPARCS} yields 
\[ k\, p_I(k) = \sum_{i=1}^k g_I(i)  p_I(k-i), \qquad k=1,2,\ldots, \]
where 
\[ g_I(i) = i \lambda_i \mathbbm{1}(i \in I), \]
and 
\[ p_I(0) = 1, \] 
so that Assumption~\ref{Boltzmann:assumption} is satisfied.

\bibliographystyle{plain}                                              
\bibliography{../../../master_bib}  %See .bbl file for locally compiled references

\end{document}